\newcommand{\dC}{{\mathbb{C}}}
\newcommand{\dR}{{\mathbb{R}}}
\newcommand{\cL}{{\mathcal L}}
\def\senki{{\lbrack\negthinspace [\bot ]\negthinspace\rbrack}}
\def\senki+{{\lbrack\negthinspace [+] \negthinspace\rbrack}}
\DeclareMathOperator{\dom}{dom}
\DeclareMathOperator{\ran}{ran}
\allowdisplaybreaks \numberwithin{equation}{section}
\newtheorem{theorem}{Theorem}[section]
\newtheorem{lemma}[theorem]{Lemma}
\theoremstyle{remark}
\begin{document}

\title[Compressed Resolvents of Schr\"{o}dinger Operators]{On Compressed Resolvents of Schr\"{o}dinger Operators with Complex Potentials}

\author[J.\ Behrndt]{Jussi Behrndt}
\address{Institut f\"ur Angewandte Mathematik, Technische Universit\"at
Graz,\\ Steyrergasse 30, 8010 Graz, Austria}
\address{Department of Mathematics, Stanford University, 450 Jane Stanford Way,\\ 
Stanford CA 94305-2125, US}
\email{behrndt@tugraz.at,jbehrndt@stanford.edu}
\urladdr{www.math.tugraz.at/$\sim$behrndt/}

\dedicatory{
Happy Birthday, Henk$\,$!\\
With great pleasure I dedicate this small note to my good friend, beer buddy, colleague,
and coauthor Henk de Snoo on the occasion of his 75th birthday.}

\begin{abstract}
The compression of the resolvent of a non-self-adjoint Schr\"{o}\-din\-ger operator $-\Delta+V$ onto a subdomain $\Omega\subset\dR^n$ 
is expressed in a Kre\u{\i}n-Na\u{\i}mark type formula, where the Dirichlet realization on $\Omega$, the Dirichlet-to-Neumann maps,
and certain solution operators of closely related boundary value problems
on $\Omega$ and $\dR^n\setminus\overline\Omega$ are being used. 
In a more abstract operator theory framework this topic is closely connected and very much inspired by the so-called coupling method that has been 
developed for the self-adjoint case by Henk de Snoo and his coauthors.
\end{abstract}

\keywords{Schr\"{o}dinger operator, complex potential, compressed resolvent, generalized resolvent, Kre\u{\i}n-Na\u{\i}mark formula, Dirichlet-to-Neu\-mann map}

\maketitle

\section{Introduction}

Let $V\in L^\infty(\dR^n)$, $n\geq 2$, be a real or complex function and consider the Schr\"{o}dinger operator
\begin{equation}\label{aop}
 A=-\Delta+V,\qquad \dom A=H^2(\dR^n),
\end{equation}
in $L^2(\dR^n)$. Note that, in general, this operator is non-self-adjoint in $L^2(\dR^n)$.
Assume that $\Omega\subset\dR^n$ is a bounded domain with $C^2$-smooth boundary, let $P_\Omega$ be the orthogonal
projection in $L^2(\dR^n)$ onto $L^2(\Omega)$, and denote by $\iota_\Omega$ the natural embedding of $L^2(\Omega)$ into $L^2(\dR^n)$. 
The aim of this note is to derive an expression for the compression of the resolvent of $A$ onto $L^2(\Omega)$, that is,  
\begin{equation}\label{compa}
 P_\Omega (A-\lambda)^{-1}\iota_\Omega,\qquad \lambda\in\rho(A).
\end{equation}

In the special case that the potential $V$ is real the Schr\"{o}dinger operator \eqref{aop} is self-adjoint in $L^2(\dR^n)$ and hence, from a physical point of view, 
the operator $A$ can be seen as the Hamiltonian of a 
closed quantum system. In this situation the compressed resolvent \eqref{compa} can be interpreted as a family of resolvents 
of non-selfadjoint operators in $L^2(\Omega)$ modeling an open quantum system, and vice versa the operator $A$ in \eqref{aop} can be viewed as the Hamiltonian describing 
the natural closed extension of an open quantum system.
In the self-adjoint context it also follows from abstract operator theory principles that the compressed resolvent \eqref{compa} 
can be described via the Kre\u{\i}n-Na\u{\i}mark formula or can be seen as a \v{S}traus family of extensions of a symmetric operator 
in $L^2(\Omega)$, see, e.g., \cite[Chapter 2.7]{BHS20}, the contributions \cite{BS09,DHMS00,DHMS06,DHMS09,DHMS12,HKS97,HKS98} 
by Henk de Snoo and his coauthors, and also the classical works \cite{K44,K46,KL77,LT77,S65}.
However, it is of particular interest to determine the various operators and mappings that appear in 
the classical abstract Kre\u{\i}n-Na\u{\i}mark formula for the present case of a Schr\"{o}dinger operator; for real potentials $V$ 
an explicit expression for the compressed resolvent \eqref{compa} was given in \cite[Theorem 8.6.3]{BHS20} and for Lipschitz subdomains of 
Riemann manifolds in \cite[Corollary 5.5]{BDGM18}.

The main purpose of this note is to show that also in the general case of a non-self-adjoint Schr\"{o}dinger operator \eqref{aop} 
(that is, the values of the potential $V\in L^\infty(\dR^n)$ are not real a.e.) the compressed
resolvent \eqref{compa} is given by
\begin{equation}\label{knf}
 (A_\Omega-\lambda)^{-1}-\gamma_\Omega(\lambda)\bigl(M(\lambda)+\tau(\lambda)\bigr)^{-1}\widetilde\gamma_\Omega(\bar\lambda)^*,
\end{equation}
where $A_\Omega=-\Delta+V_\Omega$ is the Dirichlet realization in $L^2(\Omega)$ and $V_\Omega\in L^\infty(\Omega)$ is the restriction of $V$ onto $\Omega$.
Furthermore, $M$ and $\tau$ turn out to be the (minus) $\lambda$-dependent Dirichlet-to-Neumann maps corresponding to the differential expression $-\Delta+V$ on $\Omega$ and
on $\dR^n\setminus\overline\Omega$, respectively, and $\gamma_\Omega(\lambda)$ and $\widetilde\gamma_\Omega(\bar\lambda)$ are closely related 
solution operators, also called Poisson operators in the theory of elliptic PDEs. Our analysis is strongly inspired by the abstract coupling method 
and other boundary triple techniques, 
which were originally developed for the self-adjoint case in \cite{DHMS00,DHMS09} (see also \cite{MM97,MM99,MM02,MM03} for dual pairs) 
and some more explicit preparatory results on extension theory of non-self-adjoint Schr\"{o}dinger operators from \cite{BGHN16}. 
We do not make an attempt here to develop a systematic 
study in the non-self-adjoint context, but instead we derive \eqref{knf} in a goal-oriented way using mostly PDE-techniques such as trace maps, the second Green identity, 
and well posedness of boundary value problems.

We also mention that the analysis and spectral theory of non-self-adjoint Schr\"{o}\-dinger operators has attracted a lot of attention in the recent past. In particular,
eigenvalue bounds, Lieb-Thirring inequalities, and other spectral properties of Schr\"{o}dinger operators with complex potentials
were derived in, e.g., \cite{B17,BST17,C19,DHK13,FKV18,F11,F18,FLLS06,FS17,LS09}. The resolvent formula in Theorem~\ref{resthm} below and the compressed resolvent 
\eqref{compa}--\eqref{knf} are intimately connected with the spectral analysis of Schr\"{o}dinger operators as in \eqref{aop}.
Roughly speaking, the isolated eigenvalues of $A$ coincide with the 
isolated singularities of the function $\lambda\mapsto (M(\lambda)+\tau(\lambda))^{-1}$ and also other spectral data of $A$ can be characterized with the limit behaviour
of this function; cf. \cite{BHMNW09,BMNW08,MM02}.

\subsection*{Acknowledgement.} 
Jussi Behrndt gratefully acknowledges support for the Distinguished Visiting Austrian Chair at Stanford University by the Europe Center and the
Freeman Spogli Institute for International Studies.

\subsection*{Data Availability Statement.} 
Data sharing not applicable to this article as no datasets were generated or analysed during the current study.

\section{Preparations}

Let $\Omega\subset\dR^n$, $n\geq 2$, be a bounded domain with $C^2$-smooth boundary $\Sigma=\partial\Omega$ and denote the outward normal unit vector by $\nu$.
Furthermore, let $V_\Omega\in L^\infty(\Omega)$ be a real or complex function. We consider the
differential expressions 
\begin{equation}
\cL_\Omega=-\Delta + V_\Omega\quad\text{and}\quad \widetilde \cL_\Omega=-\Delta+\overline V_\Omega,
\end{equation}
which are formally adjoint to each other. Recall that the trace mapping $C^\infty(\overline\Omega)\ni f\mapsto\{ f\vert_{\Sigma}, \partial_\nu f_{\Sigma}\}$
can be extended to a continuous surjective mapping
\begin{equation}
 H^2(\Omega)\ni f\mapsto \bigl\{\Gamma_\Omega^D f,\Gamma_\Omega^N f \bigr\}\in H^{3/2}(\Sigma)\times H^{1/2}(\Sigma),
\end{equation}
and that for $f,g\in H^2(\Omega)$ the second Green identity in this context reads as
\begin{equation}\label{green2}
 (\cL_\Omega f,g)_{L^2(\Omega)}-(f, \widetilde\cL_\Omega g)_{L^2(\Omega)}=(\Gamma_\Omega^D f,\Gamma_\Omega^N g)_{L^2(\Sigma)}-
 (\Gamma_\Omega^N f,\Gamma_\Omega^D g)_{L^2(\Sigma)};
\end{equation}
here $H^2(\Omega)$ and $H^t(\Sigma)$, $t=\frac{1}{2},\frac{3}{2}$, denote the usual $L^2$-based Sobolev spaces on $\Omega$ and $\Sigma$, respectively.

In the following the Dirichlet operators
\begin{equation*}
\begin{split}
 A_\Omega&=-\Delta + V_\Omega,\qquad\dom A_\Omega=\bigl\{f\in H^2(\Omega):\Gamma_\Omega^D f=0\bigr\},\\
 \widetilde A_\Omega&=-\Delta + \overline V_{\!\Omega},\qquad\dom \widetilde A_\Omega=\bigl\{f\in H^2(\Omega):\Gamma_\Omega^D f=0\bigr\},
\end{split}
 \end{equation*}
will be useful. Both operators $A_\Omega$ and $\widetilde A_\Omega$ are closed, densely defined in $L^2(\Omega)$, and adjoint to each other,
\begin{equation}\label{adjoints}
 A_\Omega^*=\widetilde A_\Omega.
\end{equation}
Moreover, as $\Omega$ is a bounded domain it follows from the compactness of the embedding $H^2(\Omega)\hookrightarrow L^2(\Omega)$ that the resolvents 
of $A_\Omega$ and $\widetilde A_\Omega$ are compact operators in $L^2(\Omega)$. Note also that \eqref{adjoints} implies $\lambda\in\rho(A_\Omega)$ if and only if 
$\bar\lambda\in\rho(\widetilde A_\Omega)$.

For $\lambda\in\rho(A_\Omega)$ and $\varphi\in H^{3/2}(\Sigma)$ we consider the boundary value problem 
\begin{equation}\label{bvp1}
 (\cL_\Omega-\lambda)f=0,\qquad \Gamma_\Omega^D f=\varphi,
\end{equation}
and analogously for $\mu\in\rho(\widetilde A_\Omega)$ and $\psi\in H^{3/2}(\Sigma)$ we consider the boundary value problem 
\begin{equation}\label{bvp2}
 (\widetilde\cL_\Omega-\mu)g=0,\qquad \Gamma_\Omega^D g=\psi.
\end{equation}
From the the assumptions $\lambda\in\rho(A_\Omega)$ and 
$\mu\in\rho(\widetilde A_\Omega)$ and the fact that $\Gamma_\Omega^D:H^2(\Omega)\rightarrow H^{3/2}(\Sigma)$ is onto  it follows that both boundary value problems \eqref{bvp1} 
and \eqref{bvp2} admit a unique solution 
$f(\varphi,\lambda)\in H^2(\Omega)$ and $g(\psi,\mu)\in H^2(\Omega)$. We shall use the notation 
\begin{equation}
 \gamma_\Omega(\lambda)\varphi:= f(\varphi,\lambda) \quad\text{and}\quad \widetilde\gamma_\Omega(\mu)\psi:= g(\psi,\mu)
\end{equation}
for the solution operators of \eqref{bvp1} and \eqref{bvp2}. The next lemma is essentially a consequence of
the second Green identity \eqref{green2}; we leave the proof to the reader.

\begin{lemma}\label{lemmagam}
 For $\lambda\in\rho(A_\Omega)$ and $\mu\in\rho(\widetilde A_\Omega)$ the solution operators $\gamma(\lambda)$ and $\widetilde\gamma(\mu)$
 are bounded from $L^2(\Sigma)$ to $L^2(\Omega)$ with dense domain $H^{3/2}(\Sigma)$ and range contained in $H^2(\Omega)$.
 Their adjoints are everywhere defined bounded operators from $L^2(\Omega)$ to $L^2(\Sigma)$ 
 given by
 \begin{equation*}
  \gamma_\Omega(\lambda)^*=-\Gamma_\Omega^N (\widetilde A_\Omega-\bar\lambda)^{-1} \quad\text{and}\quad \widetilde\gamma_\Omega(\mu)^*=-\Gamma_\Omega^N (A_\Omega-\bar\mu)^{-1}.
 \end{equation*}
 In particular, one has $\ran \gamma_\Omega(\lambda)^*= H^{1/2}(\Sigma)= \ran\widetilde\gamma_\Omega(\mu)^*$.
\end{lemma}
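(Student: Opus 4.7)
The plan is threefold: (i) show $\gamma_\Omega(\lambda)$ is well defined as a linear map $H^{3/2}(\Sigma)\to H^2(\Omega)$; (ii) compute its adjoint by applying the second Green identity \eqref{green2} to a carefully chosen pair; and (iii) read off the $L^2$--boundedness claim, the adjoint formula, and the range identity from (ii). The statements for $\widetilde\gamma_\Omega(\mu)$ follow verbatim by interchanging $(\cL_\Omega,A_\Omega)$ with $(\widetilde\cL_\Omega,\widetilde A_\Omega)$, so I would handle them by symmetry. For step (i), given $\varphi\in H^{3/2}(\Sigma)$, surjectivity of $\Gamma_\Omega^D:H^2(\Omega)\to H^{3/2}(\Sigma)$ gives a lift $h\in H^2(\Omega)$ with $\Gamma_\Omega^D h=\varphi$; then $f=h-(A_\Omega-\lambda)^{-1}(\cL_\Omega-\lambda)h$ lies in $H^2(\Omega)$ and solves \eqref{bvp1}, uniquely because $\lambda\in\rho(A_\Omega)$.

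For step (ii), fix $\varphi\in H^{3/2}(\Sigma)$ and $u\in L^2(\Omega)$, and set $f=\gamma_\Omega(\lambda)\varphi$ and $g=(\widetilde A_\Omega-\bar\lambda)^{-1}u$; both lie in $H^2(\Omega)$, and $\cL_\Omega f=\lambda f$, $\widetilde\cL_\Omega g=u+\bar\lambda g$, $\Gamma_\Omega^D f=\varphi$, $\Gamma_\Omega^D g=0$. Substituting into \eqref{green2}, the sesquilinearity $(f,\bar\lambda g)_{L^2(\Omega)}=\lambda(f,g)_{L^2(\Omega)}$ cancels the $(\lambda f,g)_{L^2(\Omega)}$ term, and only one surface contribution survives:
\begin{equation*}
 (\gamma_\Omega(\lambda)\varphi,u)_{L^2(\Omega)} = \bigl(\varphi,-\Gamma_\Omega^N(\widetilde A_\Omega-\bar\lambda)^{-1}u\bigr)_{L^2(\Sigma)}.
\end{equation*}

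For step (iii), the right-hand side defines a bounded operator $L^2(\Omega)\to L^2(\Sigma)$: $(\widetilde A_\Omega-\bar\lambda)^{-1}$ maps $L^2(\Omega)$ boundedly into $H^2(\Omega)$ by elliptic regularity of the Dirichlet realization (the graph norm on $\dom\widetilde A_\Omega$ is equivalent to $\|\cdot\|_{H^2(\Omega)}$), and $\Gamma_\Omega^N:H^2(\Omega)\to H^{1/2}(\Sigma)\hookrightarrow L^2(\Sigma)$ is continuous. Taking the supremum over $\|u\|_{L^2(\Omega)}\le 1$ in the identity yields $\|\gamma_\Omega(\lambda)\varphi\|_{L^2(\Omega)}\le C\|\varphi\|_{L^2(\Sigma)}$ on $H^{3/2}(\Sigma)$, so $\gamma_\Omega(\lambda)$ extends uniquely by density to a bounded operator $L^2(\Sigma)\to L^2(\Omega)$; continuity propagates the identity to all of $L^2(\Sigma)$ and identifies $\gamma_\Omega(\lambda)^*$ as asserted. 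The range identity follows because $(\widetilde A_\Omega-\bar\lambda)^{-1}$ is a bijection of $L^2(\Omega)$ onto $\{f\in H^2(\Omega):\Gamma_\Omega^D f=0\}$, whence $\ran\gamma_\Omega(\lambda)^*=\Gamma_\Omega^N\bigl(\ker\Gamma_\Omega^D\cap H^2(\Omega)\bigr)$, and surjectivity of the joint trace $(\Gamma_\Omega^D,\Gamma_\Omega^N):H^2(\Omega)\to H^{3/2}(\Sigma)\times H^{1/2}(\Sigma)$ shows this equals $H^{1/2}(\Sigma)$. The main, admittedly modest, obstacle is the complex-conjugate bookkeeping in step (ii): it is precisely the cancellation enabled by the sesquilinear pairing that produces the clean adjoint formula with $(\widetilde A_\Omega-\bar\lambda)^{-1}$ rather than $(\widetilde A_\Omega-\lambda)^{-1}$ on the right.
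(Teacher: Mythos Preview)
Your proof is correct and follows precisely the route the paper intends: the paper explicitly states that the lemma ``is essentially a consequence of the second Green identity \eqref{green2}'' and leaves the details to the reader, and your steps (i)--(iii) supply exactly those details. The computation in step (ii), the boundedness argument via duality in step (iii), and the range identification via surjectivity of the joint trace map are all standard and in line with what the paper has in mind.
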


A further important object in our study will be the (minus) Dirichlet-to-Neumann map $M(\cdot)$ corresponding to $\cL_\Omega$. Let again $\lambda\in\rho(A_\Omega)$ and 
$\varphi\in H^{3/2}(\Sigma)$. Then the operator $M(\lambda)$ is defined by
\begin{equation*}
 M(\lambda)\varphi=-\Gamma_\Omega^N \gamma_\Omega(\lambda)\varphi=-\Gamma_\Omega^N f(\varphi,\lambda),
\end{equation*}
where $\gamma_\Omega(\lambda)\varphi= f(\varphi,\lambda)\in H^2(\Omega)$ is the unique solution of the boundary value problem \eqref{bvp1}. The Dirichlet-to-Neumann 
map is an operator mapping $H^{3/2}(\Sigma)$ into $H^{1/2}(\Sigma)$, but can also be viewed as a densely defined unbounded (nonclosed) operator
in $L^2(\Sigma)$.

Besides the domain $\Omega$ and the operators introduced above we shall also make use of their counterparts acting on the unbounded (exterior) domain 
$\Omega':=\dR^n\setminus\overline\Omega$ with $C^2$-smooth boundary $\Sigma=\partial\Omega'$. 
The operators will be denoted in the same way, except that 
we shall use the subindex $\Omega'$ instead of $\Omega$, e.g., $A_{\Omega'}$ stands for the Dirichlet realization of $\cL_{\Omega'}=-\Delta+V_{\Omega'}$ in $L^2(\Omega')$.
For the (minus) Dirichlet-to-Neumann map 
we will use the symbol $\tau(\cdot)$; note that the values $\tau(\lambda)$ are well defined for all $\lambda\in\rho(A_{\Omega'})$. The above statements all remain valid on the unbounded domain, with the only exception that 
the embedding $H^2(\Omega')\hookrightarrow L^2(\Omega')$ is not compact and the resolvents 
of $A_{\Omega'}$ and $\widetilde A_{\Omega'}$ are not compact in $L^2(\Omega')$.

In our considerations we shall sometimes make use of a vector notation $(f,f')^\top:\dR^n\rightarrow\dC$
for functions $f:\Omega\rightarrow\dC$ and $f':\Omega\rightarrow\dC$.
The following simple observation will be useful in the proof of Theorem~\ref{resthm} in the next section. 

\begin{lemma}\label{h2lemma}
Let $f\in H^2(\Omega)$ and $f'\in H^2(\Omega')$. Then  
\begin{equation}
\begin{pmatrix}
 f\\ f'
\end{pmatrix}
\in H^2(\dR^n)\quad\text{if and only if}\quad \Gamma_\Omega^D f=\Gamma_{\Omega'}^D f'\,\,\text{and}\,\, \Gamma_\Omega^N f=-\Gamma_{\Omega'}^N f'.
\end{equation}
\end{lemma}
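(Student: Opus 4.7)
The plan is to reduce the statement to a standard fact about gluing Sobolev functions along a smooth interface: a function that is piecewise $H^k$ belongs to $H^k(\dR^n)$ precisely when its traces of order $0,1,\dots,k-1$ match from the two sides. For $k=2$ this means matching of the Dirichlet trace and of the full gradient on $\Sigma$; the Neumann condition then appears with a minus sign solely because the outward normals $\nu$ of $\Omega$ and $\nu'=-\nu$ of $\Omega'$ point in opposite directions.

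For the ``only if'' direction I would first recall that if $F:=(f,f')^\top\in H^2(\dR^n)\subset H^1(\dR^n)$, then the one-sided Dirichlet traces of $F$ on $\Sigma$ (obtained by restricting $F$ from $\Omega$ and from $\Omega'$) must coincide, which yields $\Gamma_\Omega^D f=\Gamma_{\Omega'}^D f'$. Applying the same reasoning to each component $\partial_i F\in H^1(\dR^n)$ of the distributional gradient, we get $(\nabla f)|_{\Sigma}=(\nabla f')|_{\Sigma}$ in $H^{1/2}(\Sigma)^n$. Contracting with the outward unit normal $\nu$ of $\Omega$ gives
\begin{equation*}
\Gamma_\Omega^N f = \nu\cdot(\nabla f)|_{\Sigma}
= \nu\cdot(\nabla f')|_{\Sigma}
= -\bigl((-\nu)\cdot(\nabla f')|_{\Sigma}\bigr)
= -\Gamma_{\Omega'}^N f',
\end{equation*}
which is the claimed Neumann compatibility.

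For the ``if'' direction I would argue in two stages. First, assuming $\Gamma_\Omega^D f=\Gamma_{\Omega'}^D f'$, a standard computation with test functions $\varphi\in C_c^\infty(\dR^n)$ using the divergence theorem separately on $\Omega$ and $\Omega'$ shows that the distributional derivatives $\partial_i F$ on $\dR^n$ are exactly the piecewise derivatives (the boundary terms from the two sides cancel because the Dirichlet traces agree and the outward normals are opposite); thus $F\in H^1(\dR^n)$. Second, the tangential components of $\nabla f$ and $\nabla f'$ on $\Sigma$ are determined by the common Dirichlet trace and therefore coincide, while the normal component matches by the assumed Neumann compatibility (after the sign correction from $\nu'=-\nu$). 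Hence $(\nabla f)|_\Sigma=(\nabla f')|_\Sigma$ in $H^{1/2}(\Sigma)^n$, and the same test-function computation applied to each $\partial_i F$ in place of $F$ yields $\partial_i F\in H^1(\dR^n)$ for all $i$, i.e., $F\in H^2(\dR^n)$.

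I do not expect any serious obstacle; the only care needed is the bookkeeping of the two outward normals (the minus sign in the Neumann condition is exactly this), and the justification that matching $H^{1/2}$-traces from both sides of a $C^2$-interface is equivalent to global $H^1$-regularity, which is a standard extension/gluing result for Lipschitz (in particular $C^2$) domains.
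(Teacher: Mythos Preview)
Your argument is correct. The forward direction matches the paper's one-line justification. For the converse, however, the paper takes a different and somewhat shorter route: rather than gluing Sobolev functions directly, it exploits the self-adjointness of the free Laplacian $H=-\Delta$ on $\dom H=H^2(\dR^n)$. Given $\tilde f=(f,f')^\top$ satisfying the two trace conditions, one applies the second Green identity \eqref{green2} separately on $\Omega$ and $\Omega'$ to compute, for every $\tilde g\in H^2(\dR^n)$,
\begin{equation*}
(H\tilde g,\tilde f)_{L^2(\dR^n)}-(\tilde g,-\Delta\tilde f)_{L^2(\dR^n)}=0,
\end{equation*}
since the boundary contributions from the two sides cancel by the matching conditions. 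This places $\tilde f\in\dom H^*=\dom H=H^2(\dR^n)$ in one stroke.

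The trade-off is this: the paper's argument is short and stays within the operator-theoretic spirit of the note, but it leans on the nontrivial fact that the maximal $L^2$-domain of $-\Delta$ on $\dR^n$ is exactly $H^2(\dR^n)$. Your approach is more hands-on, needing only first-order integration by parts and the tangential/normal decomposition of the gradient trace; it would work equally well for gluing across a smooth interface in situations where no convenient self-adjoint reference operator is available.
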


\begin{proof}
 The implication $(\Rightarrow)$ is clear from the definition of the trace maps and the implication $(\Leftarrow)$ can be viewed as a consequence of the self-adjointness
 of the Laplacian $H=-\Delta$ defined on $\dom H=H^2(\dR^n)$ and the second Green identity. In fact,
 let $\widetilde g\in \dom H$ and assume that $f\in H^2(\Omega)$ and $f'\in H^2(\Omega')$ satisfy 
 $\Gamma_\Omega^D f=\Gamma_{\Omega'}^D f'$ and $\Gamma_\Omega^N f=-\Gamma_{\Omega'}^N f'$. 
 For $\widetilde g=(g,g')^\top\in H^2(\dR^n)$ we also have $\Gamma_\Omega^D g=\Gamma_{\Omega'}^D g'$ and $\Gamma_\Omega^N g=-\Gamma_{\Omega'}^N g'$, and hence
 for $\widetilde f=(f,f')^\top\in H^2(\Omega)\times H^2(\Omega')$ it follows from \eqref{green2} that
 \begin{equation*}
  \begin{split}
   &(H \widetilde g,\widetilde f)_{L^2(\dR^n)}-(\widetilde g,-\Delta\widetilde f)_{L^2(\dR^n)}\\
   &\quad=(-\Delta g,f)_{L^2(\Omega)}-( g,-\Delta f)_{L^2(\Omega)}+(-\Delta g',f')_{L^2(\Omega')}-( g',-\Delta f')_{L^2(\Omega')}\\
   &\quad=(\Gamma_\Omega^D g,\Gamma_\Omega^N f + \Gamma_{\Omega'}^N f')_{L^2(\Sigma)}-(\Gamma_\Omega^N g,\Gamma_\Omega^D f-\Gamma_{\Omega'}^D f')_{L^2(\Sigma)}\\
   &\quad=0.
  \end{split}
 \end{equation*}
Therefore, $\widetilde f=(f,f')^\top\in\dom H^*=\dom H=H^2(\dR^n)$.
\end{proof}

\section{A formula for the resolvent of the operator $A$}

In this section we obtain a Kre\u{\i}n type formula for the resolvent of the non-self-adjoint Schr\"{o}dinger operator in \eqref{aop},
and as an immediate consequence we conclude the form \eqref{compa} of the compressed resolvent.
The construction is based on the abstract coupling method developed in \cite{DHMS00,DHMS09}, but is made more explicit here in the context
of differential operators.

In the following let $A$ be the non-self-adjoint Schr\"{o}dinger operator in \eqref{aop}, let $A_{\Omega}$ and $A_{\Omega'}$ be the Dirichlet realizations of 
$\cL=-\Delta+V$ in $L^2(\Omega)$ and $L^2(\Omega')$, respectively, and denote by $M(\cdot)$ and $\tau(\cdot)$ the (minus) Dirichlet-to-Neumann maps on $\Omega$ and $\Omega'$.
The Dirichlet realizations of  $\widetilde\cL=-\Delta+\overline V$ in $L^2(\Omega)$ and $L^2(\Omega')$ are denoted by $\widetilde A_\Omega$ and $\widetilde A_{\Omega'}$, respectively.
The next lemma is needed in the proof of our resolvent formula in Theorem~\ref{resthm} below. In the self-adjoint context this lemma was shown in \cite[Lemma 8.6.1]{BHS20}.
As the proof remains the same in the general non-self-adjoint situation we do not repeat it here.

\begin{lemma}\label{mtlem}
 For $\lambda\in\rho(A_\Omega)\cap\rho(A_{\Omega'})\cap\rho(A)$ the operator 
 \begin{equation}\label{mt}
  M(\lambda)+\tau(\lambda): H^{3/2}(\Sigma)\rightarrow H^{1/2}(\Sigma)
 \end{equation}
 is bijective.
\end{lemma}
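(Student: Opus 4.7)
The plan is to prove bijectivity by checking injectivity and surjectivity separately, using Lemma~\ref{h2lemma} as the bridge between ``matching jumps'' on the interface $\Sigma$ and membership in $H^2(\dR^n)=\dom A$.

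For injectivity, suppose $(M(\lambda)+\tau(\lambda))\varphi=0$ for some $\varphi\in H^{3/2}(\Sigma)$. Set $f:=\gamma_\Omega(\lambda)\varphi\in H^2(\Omega)$ and $f':=\gamma_{\Omega'}(\lambda)\varphi\in H^2(\Omega')$. By the definition of the solution operators, $\Gamma_\Omega^D f=\varphi=\Gamma_{\Omega'}^D f'$, and the defining relation $-\Gamma_\Omega^N f-\Gamma_{\Omega'}^N f' = (M(\lambda)+\tau(\lambda))\varphi=0$ gives $\Gamma_\Omega^N f=-\Gamma_{\Omega'}^N f'$. By Lemma~\ref{h2lemma}, $(f,f')^\top\in H^2(\dR^n)=\dom A$, and by construction $(A-\lambda)(f,f')^\top=0$. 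Since $\lambda\in\rho(A)$, we conclude $(f,f')^\top=0$, hence $\varphi=\Gamma_\Omega^D f=0$.

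For surjectivity, given $\psi\in H^{1/2}(\Sigma)$ I would build a preimage explicitly. Using surjectivity of the full trace map $H^2(\Omega)\to H^{3/2}(\Sigma)\times H^{1/2}(\Sigma)$, pick any $u\in H^2(\Omega)$ with $\Gamma_\Omega^D u=0$ and $\Gamma_\Omega^N u=-\psi$. Set $g:=(\cL_\Omega-\lambda)u\in L^2(\Omega)$, form $\widetilde g:=(g,0)^\top\in L^2(\dR^n)$, and define
\begin{equation*}
 w:=(A-\lambda)^{-1}\widetilde g\in H^2(\dR^n),\qquad w=(w_1,w_2)^\top,
\end{equation*}
with $w_1\in H^2(\Omega)$, $w_2\in H^2(\Omega')$. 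Finally set $f:=u-w_1$ and $f':=-w_2$. A direct check will show $(\cL_\Omega-\lambda)f=g-g=0$ on $\Omega$ and $(\cL_{\Omega'}-\lambda)f'=0$ on $\Omega'$, so $f=\gamma_\Omega(\lambda)\varphi$ and $f'=\gamma_{\Omega'}(\lambda)\varphi$ for $\varphi:=\Gamma_\Omega^D f\in H^{3/2}(\Sigma)$. The matching Dirichlet trace $\Gamma_\Omega^D f=\Gamma_{\Omega'}^D f'$ follows from $\Gamma_\Omega^D u=0$ and the $H^2(\dR^n)$-compatibility of $w$ from Lemma~\ref{h2lemma}; the same compatibility gives $\Gamma_\Omega^N w_1+\Gamma_{\Omega'}^N w_2=0$, whence
\begin{equation*}
 (M(\lambda)+\tau(\lambda))\varphi = -\Gamma_\Omega^N f-\Gamma_{\Omega'}^N f'= (\Gamma_\Omega^N w_1+\psi)+\Gamma_{\Omega'}^N w_2=\psi.
\end{equation*}

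The injectivity step is essentially bookkeeping with Lemma~\ref{h2lemma}. The main technical point is the surjectivity construction: one must choose the lifting $u$ so that it carries exactly the prescribed Neumann jump $\psi$ while leaving the Dirichlet data undisturbed, and then the source term $\widetilde g=(g,0)^\top$ manufactured from $u$ is precisely what is needed for the resolvent $(A-\lambda)^{-1}$ to produce a correction that simultaneously kills the Helmholtz residual on $\Omega$ and glues the Dirichlet traces of $f$ and $f'$ together. Verifying these cancellations via the $H^2(\dR^n)$-compatibility conditions of Lemma~\ref{h2lemma} is the only nontrivial bit; the rest is routine.
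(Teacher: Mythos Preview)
Your argument is correct. The injectivity step is exactly the expected one: matching traces via Lemma~\ref{h2lemma} produce an $H^2(\dR^n)$ eigenfunction of $A$ at $\lambda$, which must vanish since $\lambda\in\rho(A)$. The surjectivity construction is also sound; the two checks that matter are (i) $\Gamma_\Omega^D f=\Gamma_{\Omega'}^D f'$, which follows from $\Gamma_\Omega^D u=0$ together with $\Gamma_\Omega^D w_1=\Gamma_{\Omega'}^D w_2$, and (ii) the Neumann cancellation $\Gamma_\Omega^N w_1+\Gamma_{\Omega'}^N w_2=0$, both immediate from Lemma~\ref{h2lemma} applied to $w\in H^2(\dR^n)$. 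You implicitly use $\lambda\in\rho(A_\Omega)\cap\rho(A_{\Omega'})$ when identifying $f=\gamma_\Omega(\lambda)\varphi$ and $f'=\gamma_{\Omega'}(\lambda)\varphi$ via uniqueness of the Dirichlet problem; this is fine, though worth making explicit.

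As for comparison: the paper does not actually prove this lemma here but defers to \cite[Lemma~8.6.1]{BHS20}, remarking that the self-adjoint proof carries over verbatim. Your argument is precisely the kind of direct PDE proof one would expect in that reference, with Lemma~\ref{h2lemma} playing the role of the transmission/gluing criterion. So there is nothing to contrast methodologically; you have supplied the omitted details.
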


For our purposes it is convenient to use the notation $A_{\Omega,\Omega'}:=A_\Omega \times A_{\Omega'}$ and we regard  $A_{\Omega,\Omega'}$ as a closed operator
in $L^2(\dR^n)=L^2(\Omega)\times L^2(\Omega')$. 
Note that $\rho(A_{\Omega,\Omega'})=\rho(A_\Omega)\cap\rho(A_{\Omega'})$  and
$$
(A_{\Omega,\Omega'}-\lambda)^{-1}=\begin{pmatrix} (A_\Omega-\lambda)^{-1} & 0 \\ 0 & (A_{\Omega'}-\lambda)^{-1} \end{pmatrix},\quad \lambda\in\rho(A_\Omega)\cap\rho(A_{\Omega'}).
$$
Furthermore, we set
\begin{equation}
\begin{split}
 \gamma_{\Omega,\Omega'}(\lambda)&=\begin{pmatrix}\gamma_\Omega(\lambda) & 0 \\ 0 & \gamma_{\Omega'}(\lambda)\end{pmatrix},\quad \lambda\in\rho(A_\Omega)\cap\rho(A_{\Omega'}),\\
 \widetilde\gamma_{\Omega,\Omega'}(\lambda)&=\begin{pmatrix}\widetilde\gamma_\Omega(\lambda) & 0 \\ 0 & \widetilde\gamma_{\Omega'}(\lambda)\end{pmatrix},\quad 
 \lambda\in\rho(\widetilde A_\Omega)\cap\rho(\widetilde A_{\Omega'}),
 \end{split}
 \end{equation}
and for $\lambda\in\rho(A_\Omega)\cap\rho(A_{\Omega'})\cap\rho(A)$ we define
\begin{equation}\label{theta}
 \Theta(\lambda):=\begin{pmatrix} (M(\lambda)+\tau(\lambda))^{-1} & (M(\lambda)+\tau(\lambda))^{-1} \\ (M(\lambda)+\tau(\lambda))^{-1} & (M(\lambda)+\tau(\lambda))^{-1}
 \end{pmatrix}.
\end{equation}

%
%

The next theorem is the main result of this note. We express the resolvent of the Schr\"{o}dinger operator $A$ in terms of the resolvent of the orthogonal
sum $A_{\Omega,\Omega'}$ of the Dirichlet realizations and a perturbation term, which contains the Dirichlet-to-Neumann maps $M(\cdot)$ and $\tau(\cdot)$,
the solution operators $\gamma_\Omega(\cdot)$ and $\gamma_{\Omega'}(\cdot)$, and their adjoints. In particular, since the solutions operators are 
analytic on the resolvent sets $\rho(A_\Omega)$ and $\rho(A_{\Omega'})$, respectively, it follows that the poles of the resolvent of $A$ (and hence also the isolated 
eigenvalues) in $\rho(A_\Omega)\cap\rho(A_{\Omega'})$ coincide with the isolated singularities of the function $\Theta(\cdot)$ in \eqref{theta}.

\begin{theorem}\label{resthm}
 For $\lambda\in\rho(A_\Omega)\cap\rho(A_{\Omega'})\cap\rho(A)$ the resolvent formula
 \begin{equation}\label{jaja}
  (A-\lambda)^{-1}=(A_{\Omega,\Omega'}-\lambda)^{-1}-\gamma_{\Omega,\Omega'}(\lambda)\Theta(\lambda)\widetilde\gamma_{\Omega,\Omega'}(\bar\lambda)^*
 \end{equation}
is valid. In particular, the compression of the resolvent of $A$ onto $L^2(\Omega)$ is given by 
\begin{equation}\label{compa2}
P_\Omega(A-\lambda)^{-1}\iota_\Omega= (A_\Omega-\lambda)^{-1}-\gamma_\Omega(\lambda)\bigl(M(\lambda)+\tau(\lambda)\bigr)^{-1}\widetilde\gamma_\Omega(\bar\lambda)^*.
\end{equation}
\end{theorem}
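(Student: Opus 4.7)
The strategy is a direct verification: define the right-hand side of \eqref{jaja} applied to a generic $h=(h_\Omega,h_{\Omega'})^\top\in L^2(\dR^n)$ as a candidate $u=(u_\Omega,u_{\Omega'})^\top$, show that $u\in H^2(\dR^n)$ via the transmission criterion of Lemma~\ref{h2lemma}, and then check $(A-\lambda)u=h$. The key algebraic observation is that $\Theta(\lambda)$ in \eqref{theta} has all four blocks equal, so both components of $\gamma_{\Omega,\Omega'}(\lambda)\Theta(\lambda)\wti\gamma_{\Omega,\Omega'}(\bar\lambda)^* h$ share the common Dirichlet datum
\begin{equation*}
 \varphi:=\bigl(M(\lambda)+\tau(\lambda)\bigr)^{-1}\bigl(\wti\gamma_\Omega(\bar\lambda)^* h_\Omega+\wti\gamma_{\Omega'}(\bar\lambda)^* h_{\Omega'}\bigr).
\end{equation*}
By Lemma~\ref{lemmagam}, $\wti\gamma_\Omega(\bar\lambda)^* h_\Omega$ and $\wti\gamma_{\Omega'}(\bar\lambda)^* h_{\Omega'}$ lie in $\ran\wti\gamma_\bullet(\bar\lambda)^*=H^{1/2}(\Sigma)$, and by Lemma~\ref{mtlem} their image under $(M(\lambda)+\tau(\lambda))^{-1}$ belongs to $H^{3/2}(\Sigma)$; thus $\gamma_\Omega(\lambda)\varphi\in H^2(\Omega)$ and $\gamma_{\Omega'}(\lambda)\varphi\in H^2(\Omega')$, so the candidate $u$ is at least piecewise $H^2$.

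Next I would verify the two matching conditions of Lemma~\ref{h2lemma}. The Dirichlet matching is immediate: $(A_\Omega-\lambda)^{-1}h_\Omega$ has vanishing Dirichlet trace by definition of $A_\Omega$, while $\gamma_\Omega(\lambda)\varphi$ has Dirichlet trace $\varphi$; the same holds on $\Omega'$. Hence $\Gamma_\Omega^D u_\Omega=-\varphi=\Gamma_{\Omega'}^D u_{\Omega'}$. For the Neumann matching, I use the identities $\wti\gamma_\bullet(\bar\lambda)^*=-\Gamma_\bullet^N(A_\bullet-\lambda)^{-1}$ from Lemma~\ref{lemmagam} together with $M(\lambda)\varphi=-\Gamma_\Omega^N\gamma_\Omega(\lambda)\varphi$ and $\tau(\lambda)\varphi=-\Gamma_{\Omega'}^N\gamma_{\Omega'}(\lambda)\varphi$ to compute
\begin{equation*}
 \Gamma_\Omega^N u_\Omega+\Gamma_{\Omega'}^N u_{\Omega'}=-\wti\gamma_\Omega(\bar\lambda)^* h_\Omega-\wti\gamma_{\Omega'}(\bar\lambda)^* h_{\Omega'}+\bigl(M(\lambda)+\tau(\lambda)\bigr)\varphi,
\end{equation*}
and this vanishes precisely because of the defining equation for $\varphi$. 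By Lemma~\ref{h2lemma}, therefore $u\in H^2(\dR^n)=\dom A$.

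It remains to show $(A-\lambda)u=h$. Restricting to $\Omega$, the term $(A_\Omega-\lambda)^{-1}h_\Omega$ yields $h_\Omega$ under application of $\cL_\Omega-\lambda$, while $(\cL_\Omega-\lambda)\gamma_\Omega(\lambda)\varphi=0$ by the very definition of the solution operator in \eqref{bvp1}; the same happens on $\Omega'$. This proves $u=(A-\lambda)^{-1}h$ and establishes \eqref{jaja}. The compression formula \eqref{compa2} follows at once by specializing to $h_{\Omega'}=0$, in which case $\varphi=(M(\lambda)+\tau(\lambda))^{-1}\wti\gamma_\Omega(\bar\lambda)^* h_\Omega$, and then applying $P_\Omega$.

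The main obstacle I anticipate is not conceptual but notational: making sure the Neumann matching identity is written out cleanly with the correct signs, since two sign conventions interact (the one in $\wti\gamma_\bullet(\bar\lambda)^*=-\Gamma_\bullet^N(A_\bullet-\lambda)^{-1}$ and the minus sign built into the Dirichlet-to-Neumann maps $M(\lambda),\tau(\lambda)$). Once these are pinned down, the matching condition collapses exactly to the equation characterizing $\varphi$, which is why the particular ``rank-one'' block structure of $\Theta(\lambda)$ in \eqref{theta} is the natural choice.
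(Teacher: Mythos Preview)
Your proposal is correct and follows essentially the same route as the paper's own proof: define the candidate by the right-hand side, use Lemma~\ref{lemmagam} and Lemma~\ref{mtlem} to see it is piecewise $H^2$, verify the Dirichlet and Neumann matching conditions to invoke Lemma~\ref{h2lemma}, and finally check $(A-\lambda)u=h$ using that the $\gamma$-terms are annihilated by $\cL-\lambda$. The only cosmetic difference is that you package the common boundary datum into the abbreviation $\varphi$, which in fact makes the Neumann-matching computation slightly more transparent than the paper's line-by-line version.
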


\begin{proof}
Let $f\in L^2(\Omega)$ and $f'\in L^2(\Omega')$, and consider  
\begin{equation}\label{gg}
 \begin{pmatrix} g \\ g'\end{pmatrix} =(A_{\Omega,\Omega'}-\lambda)^{-1}\begin{pmatrix} f \\ f'\end{pmatrix} 
 -\gamma_{\Omega,\Omega'}(\lambda)\Theta(\lambda)\widetilde\gamma_{\Omega,\Omega'}(\bar\lambda)^*\begin{pmatrix} f \\ f'\end{pmatrix}, 
\end{equation}
that is,
\begin{equation}\label{ggg}
 \begin{split}
  g&=(A_\Omega-\lambda)^{-1}f- \gamma_\Omega(\lambda)\bigl(M(\lambda)+\tau(\lambda)\bigr)^{-1}
     \bigl(\widetilde \gamma_\Omega(\bar\lambda)^* f + \widetilde\gamma_{\Omega'}(\bar\lambda)^*f'\bigr),\\
     g'&=(A_{\Omega'}-\lambda)^{-1}f'- \gamma_{\Omega'}(\lambda)\bigl(M(\lambda)+\tau(\lambda)\bigr)^{-1}
     \bigl(\widetilde \gamma_\Omega(\bar\lambda)^* f + \widetilde\gamma_{\Omega'}(\bar\lambda)^* f'\bigr).
  \end{split}
\end{equation}
It follows from $\ran\widetilde\gamma_\Omega(\bar\lambda)^*=H^{1/2}(\Sigma)=\ran\widetilde\gamma_{\Omega'}(\bar\lambda)^*$ and Lemma~\ref{mtlem} that
the products on the right-hand side of \eqref{gg}--\eqref{ggg} are well-defined.
Since $\dom A_\Omega\subset H^2(\Omega)$ and $\ran\gamma_\Omega(\lambda)\subset H^2(\Omega)$ by Lemma~\ref{lemmagam} one has $g\in H^2(\Omega)$. In the same way it follows that 
$g'\in H^2(\Omega')$. Moreover, as $A_\Omega$ and $A_{\Omega'}$ are Dirichlet realizations we have 
$$\Gamma_\Omega^D(A_\Omega-\lambda)^{-1}f=0\quad\text{and}\quad \Gamma_{\Omega'}^D(A_{\Omega'}-\lambda)^{-1}f'=0.$$ Together with the definition of the solution 
operators $\gamma_\Omega(\lambda)$ and $\gamma_{\Omega'}(\lambda)$ this leads to
\begin{equation}
  \Gamma_\Omega^D g=-\bigl(M(\lambda)+\tau(\lambda)\bigr)^{-1}\bigl(\widetilde \gamma_\Omega(\bar\lambda)^* f + \widetilde\gamma_{\Omega'}(\bar\lambda)^* f'\bigr)
\end{equation}
and
\begin{equation}
  \Gamma_{\Omega'}^D g'=-\bigl(M(\lambda)+\tau(\lambda)\bigr)^{-1}\bigl(\widetilde \gamma_\Omega(\bar\lambda)^* f + \widetilde\gamma_{\Omega'}(\bar\lambda)^* f'\bigr).
\end{equation}
Using Lemma~\ref{lemmagam} and the definition of the Dirichlet-to-Neumann maps $M(\cdot)$ and $\tau(\cdot)$ we find
\begin{equation}
 \begin{split}
  \Gamma_\Omega^N g&=\Gamma_\Omega^N (A_\Omega-\lambda)^{-1}f \\
                   &\qquad -\Gamma_\Omega^N \gamma_\Omega(\lambda) \bigl(M(\lambda)+\tau(\lambda)\bigr)^{-1}
                     \bigl(\widetilde \gamma_\Omega(\bar\lambda)^* f + \widetilde\gamma_{\Omega'}(\bar\lambda)^* f'\bigr)\\
                   &=-\widetilde \gamma_\Omega(\bar\lambda)^* f + M(\lambda) \bigl(M(\lambda)+\tau(\lambda)\bigr)^{-1}
                     \bigl(\widetilde \gamma_\Omega(\bar\lambda)^* f + \widetilde\gamma_{\Omega'}(\bar\lambda)^* f'\bigr)
 \end{split}
\end{equation}
and 
\begin{equation}
 \begin{split}                     
  \Gamma_{\Omega'}^N g'&=\Gamma_{\Omega'}^N (A_{\Omega'}-\lambda)^{-1}f' \\
                       &\qquad -\Gamma_{\Omega'}^N \gamma_{\Omega'}(\lambda) \bigl(M(\lambda)+\tau(\lambda)\bigr)^{-1}
                     \bigl(\widetilde \gamma_\Omega(\bar\lambda)^* f + \widetilde\gamma_{\Omega'}(\bar\lambda)^* f'\bigr)\\
                   &=-\widetilde \gamma_{\Omega'}(\bar\lambda)^* f' + \tau(\lambda) \bigl(M(\lambda)+\tau(\lambda)\bigr)^{-1}
                     \bigl(\widetilde \gamma_\Omega(\bar\lambda)^* f + \widetilde\gamma_{\Omega'}(\bar\lambda)^* f'\bigr).
 \end{split}
\end{equation}
Therefore, we have
\begin{equation*}
 \Gamma_\Omega^D g=\Gamma_{\Omega'}^D g'\quad\text{and}\quad \Gamma_\Omega^N g+\Gamma_{\Omega'}^N g'=0,
\end{equation*}
and now Lemma~\ref{h2lemma} implies that the function in \eqref{gg} is in $H^2(\dR^n)=\dom A$. 
As $(\cL_\Omega-\lambda)\gamma_\Omega(\lambda)\varphi=0$ and $(\cL_{\Omega'}-\lambda)\gamma_{\Omega'}(\lambda)\psi=0$ for all $\varphi,\psi\in H^{3/2}(\Sigma)$ 
it is also clear that
\begin{equation*}
\begin{split}
 (A-\lambda)\begin{pmatrix} g \\ g'\end{pmatrix}&=(-\Delta +V-\lambda)\begin{pmatrix} g \\ g'\end{pmatrix}\\
 &=\begin{pmatrix} \cL_\Omega-\lambda \\ \cL_{\Omega'}-\lambda \end{pmatrix}
 \begin{pmatrix} (A_\Omega-\lambda)^{-1}f \\ (A_{\Omega'}-\lambda)^{-1}f'\end{pmatrix} \\
 &\qquad
 -\begin{pmatrix} \cL_\Omega-\lambda \\ \cL_{\Omega'}-\lambda \end{pmatrix}
 \begin{pmatrix}\gamma_\Omega(\lambda) & 0 \\ 0 & \gamma_{\Omega'}(\lambda)\end{pmatrix}
 \Theta(\lambda)\widetilde\gamma_{\Omega,\Omega'}(\bar\lambda)^*\begin{pmatrix} f \\ f'\end{pmatrix}
 \\
 &=\begin{pmatrix} f \\ f'\end{pmatrix},
 \end{split}
 \end{equation*}
which leads to \eqref{jaja}. The formula \eqref{compa2} for the compressed resolvent is an immediate consequence of \eqref{jaja}.
\end{proof}

From Lemma~\ref{lemmagam} it is clear that $\ran\widetilde\gamma_{\Omega,\Omega'}(\bar\lambda)^* = H^{1/2}(\Sigma)\times H^{1/2}(\Sigma)$, 
$\lambda\in\rho(A_\Omega)\cap\rho(A_{\Omega'})$. Furthermore, since
the embedding $H^{1/2}(\Sigma)\hookrightarrow L^2(\Sigma)$
is compact one concludes that $\widetilde\gamma_{\Omega,\Omega'}(\bar\lambda)^*$ is a compact operator from $L^2(\dR^n)$ to $L^2(\Sigma)\times L^2(\Sigma)$.
Since $(M(\lambda)+\tau(\lambda))^{-1}$, $\lambda\in\rho(A_\Omega)\cap\rho(A_{\Omega'})\cap\rho(A)$, can be extended to a bounded operator on $L^2(\Sigma)$ it follows that $\Theta(\lambda)$ in \eqref{theta} admits a 
bounded extension to $L^2(\Sigma)\times L^2(\Sigma)$. Thus, the perturbation term in the resolvent formula in Theorem~\ref{resthm} is compact, and hence 
the resolvent difference 
\begin{equation*}
(A-\lambda)^{-1}-(A_{\Omega,\Omega'}-\lambda)^{-1},\quad \lambda\in\rho(A_\Omega)\cap\rho(A_{\Omega'})\cap\rho(A),
\end{equation*}
is compact in $L^2(\dR^n)$ (and, in fact, it can be shown that the resolvent difference belongs to some Schatten--von Neumann ideal). Therefore,
well known perturbation results imply that the essential spectra of $A$ and $A_{\Omega,\Omega'}$ coincide, and as the resolvent of $A_\Omega$ is compact
we conclude
\begin{equation*}
 \sigma_{\rm ess}(A)= \sigma_{\rm ess}(A_{\Omega,\Omega'})= \sigma_{\rm ess}(A_{\Omega})\cup \sigma_{\rm ess}(A_{\Omega'})=\sigma_{\rm ess}(A_{\Omega'});
\end{equation*}
here the essential spectrum of a non-self-adjoint operator is defined as the complement of the isolated eigenvalues 
with finite algebraic multiplicities in the spectrum.

\end{document}